\newtheorem{theorem}{Theorem}[section]
\newtheorem{lemma}[theorem]{Lemma}
\newtheorem{corollary}[theorem]{Corollary}
\theoremstyle{definition}
\newtheorem{definition}[theorem]{Definition}
\newcommand{\E}{\mathop{\bf E\/}}
\title {An Elementary Proof of the Cayley Formula using Random Maps}
\author[S. Hao]{Steven Hao}
\author[A. He]{Andrew He}
\author[R. Li]{Ray Li}
\author[S. Wu]{Scott Wu}
\begin{document}

\begin{abstract}
Cayley's formula states that the number of labelled trees on $n$ vertices is $n^{n-2}$, and many of the current proofs involve complex structures or rigorous computation\cite{Aigner}. We present a bijective proof of the formula by providing an elementary calculation of the probability that a cycle occurs in a random map from an $n$-element set to an $n+1$-element set.
\end{abstract}

\maketitle

\section {Proof of Cayley's Theorem}

\begin {definition} 
For a set $S \subset \mathbb{Z}^{+},$ define an \emph{$S$-map} to be a map $f:S\to S\cup\{0\}$.
\end{definition}

\begin {lemma}
\label{keylemma}
For any finite, non-empty set $S$ of positive integers, let $f$ be a random $S$-map such that for all $i\in S$, 
\begin{enumerate}
\item the $f(i)$ are chosen independently
\item $P\left[f(i) \neq 0\right] = p$
\item when $f(i) \neq 0$, $f(i)$ is selected uniformly at random from $S$.
\end{enumerate}
Then the probability that $f$ has a cycle is $p$. (Note that as defined, $0$ cannot be in a cycle)
\end {lemma}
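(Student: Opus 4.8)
The plan is to prove the lemma by induction on $|S|$, over all finite non-empty sets $S$ of positive integers at once. A small but essential point is that the hypothesis ``$f(i)$ is uniform on $S$ given $f(i)\neq 0$'' is not preserved by the reduction below, so I would in fact strengthen the statement, and induct on it, to the following: the conclusion holds whenever the $f(i)$ are merely independent and identically distributed with $P[f(i) = 0] = 1-p$. The answer $p$ turns out not to depend on how the remaining mass is distributed over $S$. The base case $|S| = 1$ is immediate: with $S = \{s\}$, the value $f(s)$ is $0$ with probability $1-p$ and is $s$ otherwise, and $f$ has a cycle exactly when $f(s) = s$.

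For the inductive step, fix an element $s^\ast \in S$, write $S' = S\setminus\{s^\ast\}$ and $q^\ast = P[f(i) = s^\ast]$, and condition on $f(s^\ast)$. Define $f' : S' \to S'\cup\{0\}$ by splicing out $s^\ast$: put $f'(i) = f(i)$ when $f(i)\neq s^\ast$, and $f'(i) = f(s^\ast)$ when $f(i) = s^\ast$, so that iterating $f'$ mimics iterating $f$ while stepping over the vertex $s^\ast$. There are three cases. If $f(s^\ast) = s^\ast$ (probability $q^\ast$), then $s^\ast$ is a fixed point and $f$ has a cycle. If $f(s^\ast) = 0$ (probability $1-p$), then the $f'(i)$ are i.i.d.\ with $P[f'(i) = 0] = (1-p) + q^\ast$, so by the inductive hypothesis $f'$ (and hence $f$) has a cycle with probability $p - q^\ast$. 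If $f(s^\ast) = j$ for some $j \in S'$ (and these probabilities sum to $p - q^\ast$ over $j\in S'$), then the $f'(i)$ are i.i.d.\ with $P[f'(i) = 0] = 1-p$, so by induction $f$ has a cycle with probability $p$. Combining the cases, $P[f \text{ has a cycle}] = (1-p)(p-q^\ast) + q^\ast + (p - q^\ast)\,p = p$, which closes the induction.

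The step I expect to be the real work is the structural equivalence ``$f$ has a cycle if and only if $f(s^\ast)=s^\ast$ or $f'$ has a cycle.'' A cycle of $f$ avoiding $s^\ast$ is literally a cycle of $f'$; a cycle of $f$ through $s^\ast$, of the form $\cdots\to a\to s^\ast\to j\to\cdots\to a$, collapses to the $f'$-cycle $\cdots\to a\to j\to\cdots\to a$; and conversely a cycle of $f'$ that uses an edge $i\mapsto j$ produced from $f(i) = s^\ast$ lifts back through $s^\ast$ to a cycle of $f$. Crucially, at most one edge of any given $f'$-cycle can be of this rewritten type, since two such edges would force two distinct vertices of the cycle to have the same $f'$-successor $j$. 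One also has to check the distributional claims, but these are routine: each $f'(i)$ is a fixed function of $f(i)$ alone, and $f(s^\ast)$ is independent of the family $(f(i))_{i\in S'}$, so after the conditioning the $f'(i)$ remain i.i.d.\ with the stated probability of mapping to $0$. A tempting alternative is to avoid induction entirely by conditioning on the random set $f^{-1}(0)$ and invoking the fact that a uniformly random map from an $m$-subset of an $n$-set into that $n$-set has a cycle with probability $m/n$; but proving this last fact is essentially as hard as the lemma itself, so the induction above seems preferable.
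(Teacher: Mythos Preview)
Your argument is correct, but it follows a genuinely different route from the paper's. The paper conditions on the entire set $G=\{i:f(i)\neq 0\}$ of ``good'' elements and restricts $f$ to a $G$-map $f'$ (sending $i$ to $0$ whenever $f(i)\notin G$); the point is that uniformity \emph{is} preserved under this restriction, so one may apply the inductive hypothesis directly with the new parameter $p'=|G|/|S|$, and then take expectations to get $\E[|G|/|S|]=p$. No strengthening of the statement is required. By contrast, you peel off a single vertex $s^\ast$ and splice past it, which forces you to strengthen the hypothesis to arbitrary i.i.d.\ distributions since uniformity is lost. What you gain is that your strengthened statement is essentially the paper's Lemma~\ref{keyextension} (the case $p_i\equiv p$), so you prove the generalization for free; what you pay is the extra case analysis and the structural equivalence about rewritten edges. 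Amusingly, the ``tempting alternative'' you dismiss at the end---conditioning on $f^{-1}(0)$ and showing the conditional cycle probability is $k/n$---is exactly the paper's proof, and the ``last fact'' you call ``essentially as hard as the lemma itself'' is dispatched by the very same induction, since the restricted map again satisfies the lemma's hypotheses verbatim. One small presentational wrinkle: your map $f'$ fails to land in $S'\cup\{0\}$ when $f(s^\ast)=s^\ast$, but since you never use $f'$ in that case this is harmless.
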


\begin {proof}
Let $n=|S|$. We proceed by strong induction on $n$. A base case is not necessary.

For the sake of induction, assume the statement is true for all sets with size less than $n$.

Call an element $i \in S$ \textit{good} if $f(i) \neq 0$. 
Let $G$ be the set of good elements and let $k = \lvert G \rvert$.
We claim that for a fixed set $G$ of good elements, $f$ contains a cycle with probability $\frac{k}{n}$.

If $k = n$, then $f$ clearly contains a cycle, so the probability is $1$.

If $k = 0$, $f$ clearly does not contain a cycle, so the probability is $0$.

In all other cases, $k$ is a positive integer less than $n$.
Now define the $G$-map $f':G\to G\cup \{0\}$ induced by $f$ such that $f'(i) = f(i)$ if $f(i)\in G$ and $f'(i)=0$ otherwise. 
Note that $f$ has a cycle if and only if $f'$ has a cycle.
Note also that the $f'(i)$ are independent, and are chosen uniformly from $G$ when $f'(i) \neq 0$.
Furthermore, for every good element $i$, $f(i) \in G$ with probability $\frac{k}{n}$ so $f'(i) \neq 0$ with probability $\frac{k}{n}$.
Thus, by the inductive hypothesis, $f'$, and therefore $f$, has a cycle with probability $\frac{k}{n}$.

Thus, the probability of a cycle is $\E \left[ \frac{\lvert G \rvert}{n} \right]$.
However, for all $i \in S$, $P\left[i \in G\right] = p$, so $f$ has a cycle with probability $p$.

\end {proof}

\begin{corollary}
\label{keycorollary}
The number of cycle-free $S$-maps is precisely $(n+1)^{n-1}$, where $n = |S|$.
\end{corollary}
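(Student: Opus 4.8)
The plan is to read off the count from Lemma~\ref{keylemma} by specializing it to the uniform distribution on $S$-maps. First I would note that an $S$-map is just a function $f : S \to S \cup \{0\}$, and since $|S \cup \{0\}| = n+1$, there are exactly $(n+1)^n$ $S$-maps in total. So it suffices to show that the fraction of these that are cycle-free equals $\frac{1}{n+1}$.

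Next I would choose $f$ uniformly at random among all $(n+1)^n$ $S$-maps. Equivalently, the values $f(i)$ are independent, each uniform on $S \cup \{0\}$. I would check that this distribution satisfies the three hypotheses of Lemma~\ref{keylemma}: the $f(i)$ are independent; $P[f(i) \neq 0] = \frac{n}{n+1}$, so we take $p = \frac{n}{n+1}$; and conditioned on $f(i) \neq 0$, the value $f(i)$ is uniform on $S$. Applying Lemma~\ref{keylemma}, the probability that $f$ has a cycle is exactly $p = \frac{n}{n+1}$, hence the probability that $f$ is cycle-free is $1 - \frac{n}{n+1} = \frac{1}{n+1}$.

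Finally, since $f$ was uniform over a set of size $(n+1)^n$, the number of cycle-free $S$-maps is $\frac{1}{n+1} \cdot (n+1)^n = (n+1)^{n-1}$, as claimed. I do not anticipate any real obstacle here; the only thing to be careful about is confirming that the uniform measure on $S$-maps genuinely meets the hypotheses of Lemma~\ref{keylemma} (in particular that $p = \frac{n}{n+1}$ is the right value), after which the corollary is just a one-line division.
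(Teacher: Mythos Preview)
Your proposal is correct and follows essentially the same argument as the paper: choose $f$ uniformly at random, verify that this fits the hypotheses of Lemma~\ref{keylemma} with $p = \tfrac{n}{n+1}$, and multiply the resulting cycle-free probability $\tfrac{1}{n+1}$ by the total count $(n+1)^n$. The only difference is cosmetic ordering---you count the $S$-maps first whereas the paper invokes the lemma first---so there is nothing substantive to add.
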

\begin{proof}
Let $f$ be a random $S$-map, such that the values $f(i)$ are chosen independently and uniformly at random from $S\cup\{0\}.$ By Lemma~\ref{keylemma}, the probability that $f$ has a cycle is $\frac{n}{n+1},$ and thus the probability it is cycle-free is $\frac{1}{n+1}.$ Furthermore, by construction, each of the $(n+1)^n$ total $S$-maps are equally likely to be chosen as $f.$ Thus, it follows that the number of cycle-free $S$-maps is exactly $\frac{1}{n+1}$ of the total number of $S$-maps, or $(n+1)^{n-1}.$

\end{proof}

\begin {theorem}
(Cayley's Formula) For any positive integer $n$, the number of trees on $n$ labeled vertices is exactly $n^{n-2}$. 
\end {theorem}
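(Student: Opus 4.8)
The plan is to set up an explicit bijection between trees on the vertex set $\{1,2,\dots,n\}$ and cycle-free $S$-maps for $S=\{1,2,\dots,n-1\}$, and then apply Corollary~\ref{keycorollary} with $|S|=n-1$ to conclude that the number of such trees equals $((n-1)+1)^{(n-1)-1}=n^{n-2}$. Since Lemma~\ref{keylemma} and Corollary~\ref{keycorollary} require $S$ to be non-empty, I would first dispose of the case $n=1$ separately, where the formula reads $1^{-1}=1$ and there is exactly one (edgeless) tree; so assume $n\ge 2$.

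The map in one direction: given a tree $T$ on $\{1,\dots,n\}$, designate vertex $n$ as the root. Each vertex $i$ then has a unique path to $n$, and I would define $f(i)$ to be the second vertex on that path (the parent of $i$) when it lies in $S$, and $f(i)=0$ when the parent of $i$ is $n$ itself. This $f$ is an $S$-map, and it is cycle-free because iterating $f$ strictly decreases the distance to the root at each step, so it can never revisit a vertex. The map in the other direction: given a cycle-free $S$-map $f$, I would form the graph $\Gamma(f)$ on $\{1,\dots,n\}$ with edges $\{i,f(i)\}$ for each $i\in S$ with $f(i)\neq 0$, together with $\{i,n\}$ for each $i\in S$ with $f(i)=0$; this graph has exactly $n-1$ edges.

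To see $\Gamma(f)$ is a tree it is enough, by the edge count, to check connectivity, and for that it suffices that every vertex is joined to $n$. I would extend $f$ to $g\colon\{1,\dots,n\}\to\{1,\dots,n\}$ by $g(i)=n$ when $f(i)=0$ and $g(n)=n$, so that the edges of $\Gamma(f)$ are precisely the pairs $\{i,g(i)\}$ for $i\neq n$. Iterating $g$ from any $i$ must eventually reach $n$, since otherwise the iterates would remain in the finite set $S$ and hence repeat, producing a cycle of $f$; the resulting walk $i,g(i),g(g(i)),\dots,n$ along edges of $\Gamma(f)$ connects $i$ to $n$.

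Finally I would verify the two constructions are mutually inverse. Starting from a rooted tree, $\Gamma(f)$ puts an edge between each $i\neq n$ and its parent, which recovers $T$. Starting from a cycle-free $f$, I would argue that in $\Gamma(f)$ rooted at $n$ the parent of each $i\neq n$ is exactly $g(i)$, because $i,g(i),g(g(i)),\dots,n$ is a path in the tree $\Gamma(f)$ and therefore is \emph{the} path from $i$ to the root; hence the parent map of $\Gamma(f)$ returns $f$. I expect this last point — confirming that the $g$-orbit of $i$ is genuinely the tree-path to $n$, with the correct orientation — to be the one step requiring real care, since everything else is routine; but it follows from uniqueness of paths in a tree together with the fact that $n$ lies on the orbit. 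This closes the bijection and hence the proof.
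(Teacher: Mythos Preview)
Your proof is correct and follows essentially the same route as the paper's: both set up the bijection between cycle-free $[n-1]$-maps and labeled trees on $n$ vertices via the ``parent map'' rooted at a distinguished vertex, then invoke Corollary~\ref{keycorollary}. The only cosmetic differences are that the paper uses $0$ rather than $n$ as the root label, and it certifies that $\Gamma(f)$ is a tree via ``$n-1$ edges and acyclic'' (showing cycles in $\Gamma(f)$ correspond to cycles in $f$) whereas you use ``$n-1$ edges and connected'' (showing the $g$-orbit reaches the root); both are valid and equally direct.
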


\begin{proof} If $n = 1$ the proof is trivial. Assume $n \ge 2$.

Let $[n]$ denote the set $\{1,\dots, n\}.$ 

By Corollary~\ref{keycorollary} there are $n^{n-2}$ cycle-free $[n]$-maps.


Consider the following mapping from $[n-1]$-maps to graphs on $n$ vertices labeled $0,1,2,\dots, n-1$: the image of an $[n-1]$-map $f$ is the graph with an edge between $i$ and $f(i)$ for each $i$ in $[n-1]$ (possibly with double edges or self loops).
We claim this induces a bijection between cycle-free $[n-1]$-maps and trees labeled with $0,1,2,\dots, n-1$: the pre-image of a labeled tree is the map which associates each vertex $i \neq 0$ with the second vertex on the (unique) shortest path from $i$ to $0$. 

Note that the image of any $[n-1]$-map $f$ is cycle-free if and only if $f$ is cycle-free: if $i,f(i),\dots,f^k(i)$ is a cycle in $f,$ then the vertices corresponding to those indices will also form a cycle in the image of $f.$ Similarly, if we have a cycle consisting of vertices $v_1,v_2,\dots, v_n$ in the image of $f,$ then we must either have $f(v_1) = v_2, f(v_2)=v_3 \dots, f(v_n) = v_1,$ or $f(v_2) = v_1, f(v_3) = v_2,\dots, f(v_1) = v_n,$ and in either case, $f$ has a cycle.

It follows that the image of a cycle-free $[n-1]$-map is a tree, as it has $n-1$ edges and is cycle-free. On the other hand, it also follows that the pre-image of a tree is a cycle-free $[n-1]$-map, so this mapping indeed induces a bijection between cycle-free $[n-1]$-maps and trees on $n$ vertices.




It follows that the number of labeled trees on $n$ vertices is equal to the number of cycle-free $[n-1]$-maps, which by Corollary~\ref{keycorollary} is $n^{n-2}.$

\end{proof}

\section {Generalizations}

\begin {lemma}
\label{keyextension}
Consider a finite, non-empty set $S$ of positive integers with size $n$.
Let $\pi$ be a probability distribution over $S$.
Let $f$ be a random $S$-map, such that the values $f(i)$ are independently chosen so that 
with probability $p_i$, $f(i)$ is chosen from $S$ according to $\pi$, and otherwise $f(i) = 0$,
for some $p_i \in [0, 1]$. 

Then, $f$ has a cycle with probability $\sum_{i \in S} p_i\pi(i)$.

In particular, if the $p_i=p$ for all $i$, then the probability of a cycle is $p$.

Note that this probability equals the expected number of fixed points of $f$.

(Note that as defined, $0$ cannot be in a cycle)
\end {lemma}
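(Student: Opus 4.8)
The plan is to mimic the strong induction on $n = |S|$ from the proof of Lemma~\ref{keylemma}, adding the bookkeeping needed to handle a non-uniform $\pi$ and vertex-dependent $p_i$. As there, no base case is required: the subsets arising in the induction that are genuinely smaller than $S$ are covered by the hypothesis, while the extremal configurations are checked by hand.

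First I would dispatch the two remarks that need no induction. For each $i \in S$ we have $P[f(i) = i] = p_i\pi(i)$, so by linearity of expectation the expected number of fixed points of $f$ is $\sum_{i\in S} p_i\pi(i)$, matching the claimed cycle probability; and the ``in particular'' clause is immediate from the main statement since $\sum_{i \in S}\pi(i) = 1$.

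For the main claim, call $i \in S$ \emph{good} if $f(i) \neq 0$, let $G$ be the (random) set of good elements, and condition on $G$. If $G = S$, then iterating $f$ from any element must eventually repeat and so produces a cycle, giving conditional probability $1$; if $G = \emptyset$, then $f \equiv 0$ and the conditional probability is $0$. Otherwise $1 \le |G| < n$, and I define $f' : G \to G \cup \{0\}$ by $f'(i) = f(i)$ when $f(i) \in G$ and $f'(i) = 0$ otherwise; exactly as in Lemma~\ref{keylemma}, $f$ has a cycle if and only if $f'$ does, since every element of a cycle is necessarily good. The key step is to identify the conditional law of $f'$ given $G$: since the $f(i)$ are independent, conditioning on $G$ leaves each good $f(i)$ distributed according to $\pi$ on $S$ (that is precisely the conditional law of $f(i)$ given $f(i)\neq 0$), independently across $i \in G$; hence $P[f'(i) \neq 0 \mid G] = \sum_{j\in G}\pi(j) =: W$, and conditioned on being nonzero, $f'(i)$ is distributed over $G$ according to the renormalized distribution $\pi|_G / W$. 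Thus $f'$ satisfies the hypotheses of the lemma on the smaller set $G$ with all survival probabilities equal to $W$, so the inductive hypothesis (its ``in particular'' case) yields $P[f' \text{ has a cycle} \mid G] = W = \sum_{j \in G}\pi(j)$. This formula also records the two extremal cases, so unconditionally
\begin{equation*}
P[f \text{ has a cycle}] = \E\Big[\sum_{j\in G}\pi(j)\Big] = \sum_{j\in S}\pi(j)\,P[j \in G] = \sum_{j\in S}p_j\pi(j).
\end{equation*}

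The only genuinely delicate point is this identification of the conditional distribution of $f'$: one must verify that conditioning on the good set $G$ leaves each good $f(i)$ distributed as $\pi$ itself, and then track the renormalization carefully, so that the factor $1/W$ appearing in $\pi|_G/W$ exactly cancels the $W = \sum_{j\in G}\pi(j)$ coming from the survival probability and leaves $\sum_{j\in G}\pi(j)$. Everything else is a direct transcription of the argument for Lemma~\ref{keylemma}.
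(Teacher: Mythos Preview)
Your proof is correct and follows essentially the same route as the paper: strong induction on $|S|$, conditioning on the good set $G$, reducing to the induced map $f'$ on $G$, and then un-conditioning via linearity of expectation to obtain $\sum_{i\in S}p_i\pi(i)$. Your write-up is in fact slightly more explicit than the paper's about the conditional law of $f'$ (identifying the renormalized distribution $\pi|_G/W$ and invoking the equal-$p_i$ case of the inductive hypothesis), but the argument is the same.
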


\begin {proof}
We proceed by strong induction on $n$. A base case is not necessary.

For the sake of induction, assume the statement is true for sets with size less than $n$.

Call an element $i$ of $S$ \textit{good} if $f(i) \neq 0.$ 
Let $G$ be the set of good elements. 
Let $k = \lvert G \rvert$.
Let $q = P\left[f(1) \in G \vert f(1) \neq 0\right] = \sum_{i \in G} \pi(i)$. 
We claim that for a fixed set $G$ of good elements, $f$ contains a cycle with probability $q$.

If $k = n$, the probability is $1$.

If $k = 0$, the probability is $0$.

Now suppose that $0 < k < n$. 
Define the $G$-map $f'$ induced by $f$ such that $f'(i) = f(i)$ if $f(i)\in G$ and $f'(i)=0$ otherwise. 
Note that $f$ has a cycle if and only if $f'$ has a cycle.
Note that the $f'(i)$ are independent and identically distributed for $i \in G$.
Furthermore, for every good element $i,$ $f(i) \in G$ with probability $q$ so $f'(i) \neq 0$ with probability $q$.
Thus, by the inductive hypothesis, $f'$, and therefore $f$, has a cycle with probability $q = \sum_{i \in G} \pi(i)$.

Note that for all $i \in S$, $i$ is good with probability $p_i$. 
Then, by linearity of expectation, the probability of a cycle is 
$\sum_{i \in S} p_i\pi(i)$ and the induction is complete.

\end {proof}
\section{Acknowledgements}

We would like to thank Paul Christiano, Yan Zhang, Po-Shen Loh, and others for their guidance. 

\bibliographystyle{plain}
\bibliography{cayley}
\end{document}